\newtheorem{theorem}{Theorem}
\newenvironment{remark}[1][Remark]
           {\medbreak\noindent {\em #1. \enspace}}
           {\par \medbreak}
\makeatletter \@addtoreset{equation}{section} \makeatother
\newcommand\lam{\lambda}
\newcommand\na{\nabla}
\newcommand\Del{\Delta}
\newcommand\ra{\rightarrow}
\newcommand\Rc{\textup{Rc}}
\newcommand\SL{\text{SL}(2, \mathbb{R})}
\def\ddt{\frac{d}{dt}}
\journal{Manuscripta Mathematica}
\begin{document}

\begin{frontmatter}

\title{Eigenvalues of the Laplace operator with potential under the backward Ricci flow on  locally homogeneous 3-manifolds}

\author{Songbo Hou}
\ead{housb@cau.edu.cn}
\address{Department of Applied Mathematics, College of Science, China Agricultural University,  Beijing, 100083, P.R. China}
\author{Shusen Yang}
\ead{13651293036@163.com}
\address{Department of Applied Mathematics, College of Science, China Agricultural University,  Beijing, 100083, P.R. China}

\begin{abstract}
  Let $\lam(t)$ be  the first eigenvalue of $-\Del+aR\, (a>0)$ under the backward Ricci flow on  locally homogeneous 3-manifolds, where $R$ is the scalar curvature.
 In the  Bianchi case,  we get the upper and lower bounds of $\lam(t)$. In particular, we show that when the the backward Ricci flow converges to a sub-Riemannian geometry after a proper re-scaling,  $\lam^{+}(t)$ approaches zero, where $\lam^{+}(t)=\max\{\lam(t),0\}$.

\end{abstract}

\begin{keyword}
Eigenvalue\sep Backward Ricci flow\sep Locally homogeneous 3-manifold

\MSC [2020] 53E20,  58C40

\end{keyword}

\end{frontmatter}

\section{Introduction}

The research  on  the eigenvalues of  operators under  geometric flows has attracted many attentions. In a seminal paper \cite{PG02}, Perelman depicted the nondecreasing behavior of
the first eigenvalue of  $-\Del +R/4$   under the Ricci flow, where $-\Del$ denotes the Laplace-Beltrami operator, $R$  denotes the scalar curvature. As an application, he proved the non-existence of nontrivial steady or expanding breathers on closed
manifolds. Perelman's results were extended by Cao \cite{Cao07} to the operator $-\Delta+\frac{R}{2}$ on manifolds with nonnegative curvature
operator under the Ricci flow. Li \cite{JFL07} removed Cao's curvature assumption  and got the similar conclusion. One may refer to \cite{ Cao08, CHL12, Ma06} for more studies  about the operator $-\Delta +aR \,(a\geq 0)$ and refer to \cite{WWZ10, Wu11} for  the $p$-Laplace operator. In particular, the upper and lower bounds of eigenvalues were obtained by analyzing the evolution equation on  closed Riemann surfaces \cite{CHL12,WWZ10}.

There are also many results under  other geometric flows. In \cite{LJ13}, Li obtained the monotonicity of  eigenvalues under various
re-scaled versions of Ricci flow. For the normalized powers of the $m$th mean curvature flow, Zhao \cite{ZL} established the monotonicity of the first eigenvalue of $p$-Laplace operator under certain conditions. Under the harmonic-Ricci flow, Li \cite{LI} studied the the eigenvalues and entropies. Fang and Yang \cite{FY} investigated the  first eigenvalue of the operator $-\Del_{\phi}+\frac{R}{2}$, where $-\Del_{\phi}$ is the Witten-Laplacian and constructed monotonic quantities under the Yamabe flow. Along  the re-scaled List's extended Ricci flow, Huang and Li \cite{HL} considered monotonicity formulae of eigenvalues
of the Laplacian and entropies. For the monotonicity of eigenvalues and quantities along  the Ricci-Bourguignon flow, one may refer to \cite{FA, W}. Related results also include \cite{AA, CHZ, FXZ,FYZ,GPT, HK, Mao}.

In general, it's difficult to get the upper and lower bounds of eigenvalues of geometric operators under  flows. The obstacle is that the metric is variable, hence the classical methods, such as gradient estimates of eigenfunctions, heat kernel estimates et al., can not be employed directly.  While on closed surfaces, by controlling the scalar curvature, we  studied the derivative of the eigenvalue, then got the upper and lower bounds of eigenvalues  by integration \cite{CHL12}. But the approach is not available for high dimensional manifolds. In order to explore possibilities on locally homogeneous 3-manifolds, we developed a new method to estimate the eigenvalues by comparing components of the Ricci curvature \cite{Hou1,Hou}. Similar methods were also taken by Korouki and Razavi \cite{KR19} to study the eigenvalue of $-\Del-R$ under the Ricci flow. In this paper, we consider the eigenvalues  of the operator $-\Del+aR$ under the backward Ricci flow on  locally homogeneous 3-manifolds, where $a$ is a positive constant.

 There are nine classes of locally homogeneous 3-manifolds. These classes can be divided into two families. According the classification in \cite{IJ92}, $H(3)$, $H(2)\times \mathbb{R}$ and $\text{SO(3)}\times  \mathbb{R}$, where $H(n)$ means the group of isometries of hyperbolic $n$-space, belong to the first family.  Six other classes $\mathbb{R}^3$, $\text{SU(2)}$, $\SL$, $\text{Heisenberg}$, $E(1,1)$ (the group of isometries of the plane with flat Lorentz metric) and $E(2)$ (the group of isometries of the Euclidian plane) belong to the second family which is called the Bianchi case.

 In the Bianchi case, there is a Milnor frame $(X_{1},X_{2},X_{3})$ which can diagonalize the initial metric and the Ricci tensor. The property that the Ricci flow keeps the diagonalization enables us to write

$$g=A(t)\theta^1\otimes \theta^1+B(t)\theta^2\otimes \theta^2+C(t)\theta^3\otimes \theta^3, $$
where $(\theta^1,\theta^2,\theta^3)$ is the  frame dual to $(X_{1},X_{2},X_{3})$. Hence we reduce the Ricci flow to an ODE system in $(A,B,C)$. The following backward Ricci flow
\begin{equation}\frac{\partial g}{\partial t}=2\Rc-\frac{2r}{3}g,\,\,g(0)=g_0\end{equation} was studied in \cite{CGS, CS}. An interesting phenomenon is that after a proper re-scaling,  the flow converges uniformly to a sub-Remannian geometry in many cases. For more results about the Ricci flow on locally homogeneous manifolds,  we refer the reader to
\cite {Hou2,IJL, KM01} and so on.

We obtain the following theorem which extends the results in \cite{Hou1}.

\begin{theorem}
Let $(M, g(t)), t\in [0,T_+),$  be a solution to the backward Ricci flow in the Bianchi case where $T_+$  is the maximal existence time. Let
$\lam(t)$ be  the first eigenvalue of $-\Del+aR$, $a\geq 0$.  When the flow  converges to a sub-Riemannian geometry after a proper re-scaling,  $\lam^+(t)$ goes to zero  as $t\rightarrow T_+$, where  $\lam^{+}(t)=\max\{\lam(t), 0\}$.
\end{theorem}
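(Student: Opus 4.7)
The plan is to exploit local homogeneity of $g(t)$ to reduce $\lam(t)$ to a scalar multiple of $R(t)$, and then to control $R(t)$ along (1.1) using the Milnor frame and the asymptotics of \cite{CGS,CS}.

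First, since $(M,g(t))$ is locally homogeneous, $R(t)$ is a spatial constant on $M$ for every $t$. Inserting this into the Rayleigh characterization
\[
\lam(t)=\inf_{f\not\equiv 0}\frac{\int_M(|\na f|^2+aR f^2)\,dV}{\int_M f^2\,dV}
\]
yields $\lam(t)=aR(t)$: a constant trial function gives $\lam(t)\le aR(t)$, while $\int_M|\na f|^2\,dV\ge 0$ provides the reverse inequality (the smallest eigenvalue of $-\Del_{g(t)}$ on the compact locally homogeneous manifold is $0$, attained on constants). Hence $\lam^+(t)=a\max\{R(t),0\}$, and the theorem reduces to proving $\limsup_{t\to T_+}R(t)\le 0$.

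In the Bianchi case, writing $g(t)=A(t)\theta^1\otimes\theta^1+B(t)\theta^2\otimes\theta^2+C(t)\theta^3\otimes\theta^3$ in a Milnor frame makes $R$ an explicit rational function of $(A,B,C)$ and the Milnor structure constants, and (1.1) becomes an ODE system for $(A,B,C)$ of the type analyzed in \cite{CGS,CS}. A direct computation, using $\Del R=0$ and $r=R$ on a locally homogeneous 3-manifold, gives the monotonicity formula
\[
\ddt R=-2\bigl|\Rc-\tfrac{R}{3}g\bigr|^2\le 0,
\]
so $R(t)$ is non-increasing along (1.1). If $R(0)\le 0$ we are done immediately; otherwise $R(t)\searrow R_\infty\in[-\infty,R(0))$, and I must rule out $R_\infty>0$. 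For each Bianchi class in which the rescaled flow converges to a sub-Riemannian geometry (Heisenberg, $\SL$, $E(1,1)$, $E(2)$, and the relevant orbits of $SU(2)$, as isolated in \cite{CGS,CS}), I would substitute the asymptotic behavior of $(A(t),B(t),C(t))$ from \cite{CGS,CS} into the Milnor formula for $R$: the degenerate direction produced by the sub-Riemannian limit forces the sign-indefinite numerator of $R$ to be eventually non-positive or subleading compared with the positive denominator $2ABC$, so that $R(t)\to 0$ or $R(t)\to-\infty$. In either scenario $\lam^+(t)\to 0$.

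The main obstacle is this last case-by-case asymptotic step. Each Bianchi class has its own structure constants, its own natural rescaling, and its own degenerate direction, so the conclusion $R_\infty\le 0$ has to be extracted from the ODE asymptotics in \cite{CGS,CS} one class at a time. A tempting unified alternative would be to integrate the monotonicity formula to obtain $\int_0^{T_+}|\Rc-\tfrac{R}{3}g|^2\,dt=\tfrac12(R(0)-R_\infty)$ and argue that a sub-Riemannian degeneration is incompatible with the traceless Ricci remaining integrable in time; however, making this rigorous seems to require precisely the class-by-class rate information from \cite{CGS,CS} that the direct substitution already uses.
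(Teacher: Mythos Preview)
Your reduction $\lam(t)=aR(t)$ is valid (on a closed locally homogeneous manifold the lowest eigenvalue of $-\Del+aR$ is attained by constants, since $R$ is spatially constant), and once this is in hand Theorem~1 reduces to the scalar statement $\limsup_{t\to T_+}R(t)\le 0$, which can be read off directly from the Milnor-frame formulas: for Heisenberg, $E(1,1)$, $E(2)$ and $\SL$ one has $R\le 0$ identically, while for the sub-Riemannian orbit of $\mathrm{SU}(2)$ (case $A_0>B_0\ge C_0$) the asymptotics of \cite{CS} give $R\sim -\tfrac12 A^2\to-\infty$. So the ``main obstacle'' you flag is in fact quite mild. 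Your monotonicity identity $\ddt R=-2|\Rc-\tfrac{R}{3}g|^2$ is correct and gives a clean way to handle the $\mathrm{SU}(2)$ case without asymptotics: if $R_\infty>0$ then the traceless Ricci would have to be $L^1$ in time, contradicting the blow-up of $A$.

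This is a genuinely different and more elementary route than the paper's. The paper never invokes $\lam=aR$; instead it differentiates the eigenvalue via the evolution identity
\[
\ddt\lam=\tfrac23 R\lam-2a|\Rc|^2-2\int R_{ij}\na_iu\na_ju\,d\mu,
\]
bounds the gradient term using the extreme Ricci eigenvalues and $\int|\na u|^2\,d\mu=\lam-aR$, and then runs a Gr\"onwall-type argument class by class to obtain explicit two-sided bounds such as $\lam(t)\le\lam(\tau)\bigl(\tfrac{T_+-t}{T_+-\tau}\bigr)^{c_1}$. The paper's method is more laborious for Theorem~1 itself, but it delivers quantitative upper and lower envelopes for $\lam(t)$ (Theorems~3, 5, 7, 9, 11) that your argument does not directly yield; conversely, your observation shows those envelopes are really statements about $aR(t)$ and could be obtained without ever touching an eigenfunction.
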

Since the first eigenvalue of the Laplacian is nonnegative and the convergence is described in  \cite{Hou1}, we only  study the eigenvalue of the operator $-\Del+aR$, $a> 0$.
The remaining part of this paper is arranged as follows. In Section 2, we introduce an evolution equation of the eigenvalue under the backward Ricci flow.
Since the analysis on $\mathbb{R}^3$ is trivial, from Section 3 to Section 7,  we only investigate the behaviors of eigenvalues on Heisenberg,  $\text{SU}(2)$,  $E(1,1)$, $E(2)$ and $\SL$  case by case.

\section{Evolution equation}
In this section, we give an evolution equation of the eigenvalue which is important  in the subsequent analysis.
\begin{theorem}
Let $(M, g(t)), t\in [0,T_+)$  be a solution to the backward Ricci flow  on a locally homogeneous  3-manifold.
Denote by $\lam(t)$ the first eigenvalue of $-\Del+aR$, $a>0$ and by $u(x,t)$ the associated  positive eigenfunction with   $\int u^2(x,t)d\mu=1.$
Then under the the backward Ricci flow, there holds
\begin{align}\label{eve}\ddt\lam=\frac{2}{3}R\lam-2a|\Rc|^2-\int (2R_{ij}\na_iu\na_ju)d\mu.\end{align}
\end{theorem}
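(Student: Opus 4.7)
My plan is to start from the Rayleigh-quotient form of $\lambda$, differentiate under the integral, and use the locally homogeneous hypothesis to eliminate boundary/gradient terms that would not vanish in the general case.

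First I would write, using the eigenvalue equation $-\Delta u+aRu=\lambda u$ and the normalization $\int u^{2}d\mu=1$,
\begin{equation*}
\lambda(t)=\int_{M}u(-\Delta u+aRu)\,d\mu=\int_{M}|\nabla u|^{2}\,d\mu+aR,
\end{equation*}
where $R$ may be pulled outside the integral because local homogeneity forces $R$ to be spatially constant. Then I would record the basic variational data for the flow $\partial_{t}g_{ij}=2R_{ij}-\tfrac{2r}{3}g_{ij}$: the inverse evolves by $\partial_{t}g^{ij}=-2R^{ij}+\tfrac{2r}{3}g^{ij}$, and the volume form satisfies $\partial_{t}d\mu=\tfrac12 g^{ij}\partial_{t}g_{ij}\,d\mu$. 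Since $M$ is locally homogeneous we have $r=R$, so $\mathrm{tr}_{g}(2\Rc-\tfrac{2r}{3}g)=0$ and hence $d\mu$ is preserved along the flow. Differentiating the normalization then gives $\int u\,u_{t}\,d\mu=0$, a fact I will need later.

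Next I would compute $\tfrac{d}{dt}R$. Plugging $h_{ij}=2R_{ij}-\tfrac{2r}{3}g_{ij}$ into the standard first-variation formula $\partial_{t}R=-\Delta(\mathrm{tr}_{g}h)+\nabla^{i}\nabla^{j}h_{ij}-\langle h,\Rc\rangle$ and using $\mathrm{tr}_{g}h=0$, the contracted Bianchi identity $\nabla^{j}R_{ij}=\tfrac12\nabla_{i}R$, and $\Delta R=0$ (homogeneity again) collapses everything to
\begin{equation*}
\frac{d}{dt}R=\tfrac{2R^{2}}{3}-2|\Rc|^{2}.
\end{equation*}

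With these ingredients, I would differentiate $\lambda=\int|\nabla u|^{2}d\mu+aR$. The volume form is constant, so only $g^{ij}$ and $u$ contribute to the gradient integral:
\begin{equation*}
\frac{d}{dt}\!\int|\nabla u|^{2}d\mu=\int\!\Bigl(-2R^{ij}+\tfrac{2R}{3}g^{ij}\Bigr)\nabla_{i}u\nabla_{j}u\,d\mu+2\!\int\langle\nabla u,\nabla u_{t}\rangle d\mu.
\end{equation*}
Integration by parts converts the last term to $-2\int(\Delta u)u_{t}\,d\mu=2\int(\lambda u-aRu)u_{t}\,d\mu$, which vanishes by $\int u\,u_{t}\,d\mu=0$. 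Adding $a\,dR/dt$ and substituting $\int|\nabla u|^{2}d\mu=\lambda-aR$ into the $\tfrac{2R}{3}$-term makes the $\tfrac{2aR^{2}}{3}$ pieces cancel, leaving exactly $\tfrac{2}{3}R\lambda-2a|\Rc|^{2}-\int 2R_{ij}\nabla_{i}u\nabla_{j}u\,d\mu$.

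The only real bookkeeping obstacle is the sign flip coming from the backward flow plus the $-\tfrac{2r}{3}g$ normalization term — one has to be careful to note that $r=R$ kills the trace of $h$ (so $d\mu$ is preserved and the eigenfunction normalization is automatically maintained), and that the gradient-of-$u_{t}$ term is killed by the same normalization via integration by parts. Once these cancellations are identified the identity (\ref{eve}) follows by direct combination of terms.
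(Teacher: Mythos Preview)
Your argument is correct and complete. The paper itself omits the proof, saying only that it ``is similar to Lemma 3.1 in \cite{CHL12}'', so there is no detailed proof to compare against; the approach you give --- exploiting local homogeneity to make $R$ and $|\Rc|^2$ spatial constants, noting that $r=R$ forces $d\mu$ to be preserved so that $\int uu_t\,d\mu=0$, and then differentiating $\lambda=\int|\nabla u|^2\,d\mu+aR$ directly --- is exactly the natural adaptation of that lemma to the present setting and yields the stated formula with no gaps.
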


We omit the proof  as it is  similar to Lemma 3.1 in \cite {CHL12}.

\section{\text{Heisenberg}}
For  a  given metric $g_0$, there exists  a  Milnor frame such that
$$\left[X_2,X_3\right]=2X_1,\,\,\,\left[X_3,X_1\right]=0,\,\,\,\left[X_1,X_2\right]=0.$$ Let $A_0$, $B_0$, $C_0$ be the initial value of $A$, $B$, $C$ respectively.
We take  the normalization $A_0B_0C_0=4$. Following the calculations on page 171 in \cite{BLN}, one has
\begin{equation}\label{he1}
R_{11}=\frac{1}{2}A^{3},\,\,
R_{22}=-\frac{1}{2}A^{2}B,\,\,
R_{33}=-\frac{1}{2}A^{2}C,\,\,
R=-\frac{1}{2}A^{2}.
\end{equation}
Then  the backward Ricci flow equations become
$$
\left\{
\begin{aligned}
&\ddt A=\frac{4}{3}A^3,\\
&\ddt B=-\frac{2}{3}A^2B,\\
&\ddt C=-\frac{2}{3}A^2C.
\end{aligned}
\right.
$$
Solving these equations gives
\begin{equation}\label{he2}
\left\{
\begin{aligned}
&A=A_{0}\left(1+\frac{16}{3}R_{0}t\right)^{-1/2},\\
&B=B_{0}\left(1+\frac{16}{3}R_{0}t\right)^{1/4},\\
&C=C_{0}\left(1+\frac{16}{3}R_{0}t\right)^{1/4},
\end{aligned}\right.
\end{equation}
where $R_{0}=-\frac{1}{2}A_{0}^{2}$.
The re-scaled flow $\bar{g}(t)=(C_0/C(t))g(t)$ converges uniformly to a sub-Riemannian geometry.

 Hereafter we denote by $\tau$, $c_i$  variable constants which can be understood from the context.
\begin{theorem}
Let $\lam(t)$ be the  first eigenvalue of $-\Del+aR$.  Suppose $B_0\geq C_0$.  Then we get
$$\lam(\tau)+\frac{c_1A_0^3}{4\left(1-\frac{8A_0^2}{3}\tau\right)^{3/2}}-\frac{c_1A_0^3}{4\left(1-\frac{8A_0^2}{3}t\right)^{3/2}}\leq \lam(t)\leq \lam(\tau)\left( \frac{1-\frac{8A_0^2}{3}t}{1-\frac{8A_0^2}{3}\tau}\right)^{\frac{1}{8}}
e^{\frac{3B_0}{2}\left[\left(1-\frac{8A_0^2\tau}{3}\right)^{1/4}-\left(1-\frac{8A_0^2t}{3}\right)^{1/4}\right]}
$$ for $t\geq \tau$, where $\tau$ is a fixed time and $c_1$  is a positive constant. As a consequence,  $\lam^+(t)\rightarrow0$ as $t\rightarrow 3/(8A_0^2)$.
\end{theorem}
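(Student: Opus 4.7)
The plan is to substitute the Heisenberg data (\ref{he1})--(\ref{he2}) into the evolution equation (\ref{eve}) and reduce the problem to a scalar differential inequality for $\lambda(t)$, which then integrates explicitly in terms of $A(t)$ and $B(t)$.

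Using $R=-A^2/2$, $|\Rc|^2=3A^4/4$, and the Ricci eigenvalues $A^2/2,-A^2/2,-A^2/2$ in the orthonormal Milnor frame $e_i=X_i/\sqrt{g_{ii}}$, a direct computation gives
\[
R_{ij}\nabla_iu\nabla_ju=A^2(e_1u)^2-\tfrac{A^2}{2}|\nabla u|^2.
\]
Combining with $\int|\nabla u|^2\,d\mu=\lambda-aR=\lambda+aA^2/2$, equation (\ref{eve}) becomes
\[
\frac{d\lambda}{dt}=\frac{2A^2}{3}\lambda-aA^4-2A^2\int(e_1u)^2\,d\mu.
\]
All the subsequent analysis rests on this single reduction.

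For the upper bound, assuming $\lambda>0$, I would prove a Poincar\'e-type lower bound on $\int(e_1u)^2$ strong enough to yield
\[
\frac{d\lambda}{dt}\le\Bigl(-\tfrac{A^2}{3}+A^2B\Bigr)\lambda.
\]
Gronwall then gives $\lambda(t)\le\lambda(\tau)\exp\int_\tau^t(-A^2/3+A^2B)\,ds$; the substitution $w=1-\tfrac{8A_0^2}{3}s$ evaluates $\int(-A^2/3)\,ds$ to $\tfrac18\log w$ (producing the polynomial factor) and $\int A^2B\,ds$ to $-\tfrac{3B_0}{2}w^{1/4}$ (producing the exponential factor), recovering the claimed upper bound. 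For the lower bound, the trivial estimate $\int(e_1u)^2\le\int|\nabla u|^2=\lambda+aA^2/2$ gives $\lambda'\ge-\tfrac{4A^2}{3}\lambda-2aA^4$; using the upper bound to control $\lambda$ uniformly on $[\tau,t]$, the right-hand side is dominated by a single term $-c_1A^5$, and integrating---noting that $\int A^5\,ds$ is proportional to $\bigl[(1-8A_0^2s/3)^{-3/2}\bigr]_\tau^t$---yields the stated lower bound.

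The convergence $\lambda^+(t)\to 0$ as $t\to 3/(8A_0^2)$ is then immediate from the upper bound: the polynomial factor $(1-8A_0^2t/3)^{1/8}$ tends to $0$ while the exponent of the exponential factor converges to the finite value $\tfrac{3B_0}{2}(1-8A_0^2\tau/3)^{1/4}$, so the whole right-hand side tends to $0$, forcing $\lambda^+(t)\to0$.

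The main obstacle is the refined lower bound on $\int(e_1u)^2\,d\mu$ needed in the upper-bound step. The bare estimate $\int(e_1u)^2\ge0$ gives only $\lambda'\le(2A^2/3)\lambda-aA^4$, which integrates to exponent $1/4$ with no exponential factor. Recovering the sharper exponent $1/8$ together with the $B$-dependent exponential will require exploiting the Heisenberg bracket $[X_2,X_3]=2X_1$---which, after integration by parts, expresses $X_1u$ in terms of second-order horizontal operators---together with the ordering $B\ge C$ (preserved by the flow, since $B$ and $C$ satisfy identical ODEs) to identify $B$ as the dominant horizontal scale.
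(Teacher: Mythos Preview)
Your exact reduction
\[
\frac{d\lambda}{dt}=\frac{2A^2}{3}\lambda-aA^4-2A^2\int(e_1u)^2\,d\mu
\]
is correct, and your lower-bound argument (bounding $\int(e_1u)^2\le\int|\nabla u|^2$ and then controlling $\lambda$ by the already-established upper bound) matches the paper's in spirit. The genuine gap is exactly where you locate it: the upper-bound step. To reach $\lambda'\le(-A^2/3+A^2B)\lambda$ from your identity you would need
\[
\int(e_1u)^2\,d\mu\ \ge\ \frac{1-B}{2}\,\lambda-\frac{a}{2}A^2,
\]
and your bracket heuristic does not establish this. Writing $X_1u=\tfrac12[X_2,X_3]u$ and integrating by parts does produce horizontal second-order terms, but there is no clear route from those back to a multiple of $\lambda$ with the specific coefficient $(1-B)/2$; you leave this as an unproved obstacle, and it is a real one.

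The paper bypasses this difficulty entirely. It never isolates $\int(e_1u)^2$; instead it bounds the full Ricci term in the evolution equation in one stroke by comparison with the extremal diagonal entry,
\[
-2\int R_{ij}\nabla_iu\nabla_ju\,d\mu\ \le\ -2R_{22}\int|\nabla u|^2\,d\mu\ =\ -2R_{22}\,(\lambda-aR),
\]
using only $\int|\nabla u|^2=\lambda-aR$. With $R_{22}=-\tfrac12A^2B$ and $R=-\tfrac12A^2$ this gives directly
\[
\frac{d\lambda}{dt}\ \le\ \Bigl(-\frac{A^2}{3}+A^2B\Bigr)\lambda-\frac{3a}{2}A^4+\frac{a}{2}A^4B,
\]
and since $B\to0$ the last two terms are eventually negative and may be dropped. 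The Gronwall inequality, the stated upper bound, and the conclusion $\lambda^+\to0$ then follow exactly as you describe---but without any Poincar\'e-type control of the eigenfunction. In short, the missing ingredient in your plan is not a new spectral estimate on $u$; it is the observation that the crude extremal-Ricci bound on the whole term $\int R_{ij}\nabla_iu\nabla_ju$ already delivers the differential inequality you were trying to manufacture.
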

\begin{proof}
Suppose  $B_0\geq C_0$. we get $|\Rc|^2=\frac{3}{4}A^4$ by (\ref{he1}). Then (\ref{eve}) together with  (\ref{he1}) implies
\begin{equation}\label{he3}\frac{2}{3}R\lam -2R_{11}\lam-\frac {3a}{2}A^4+2aR_{11}R\leq\ddt \lam\leq \frac{2}{3}R\lam -2R_{22}\lam-\frac {3a}{2}A^4+2aR_{22}R.\end{equation}
Since $A\rightarrow +\infty$, $B\ra 0$ and $C\ra 0$ as $t\rightarrow\frac{3}{8A_0^2}$,  we have
$$ -\frac {3a}{2}A^4+2aR_{22}R= -\frac {3a}{2}A^4+\frac{a}{2}A^4 B <0$$ after a time $\tau$.
This leads to
$$\ddt \lam\leq \frac{2}{3}R\lam -2R_{22}\lam=\left(-\frac{1}{3}A^2+A^2B\right)\lam$$ for $t\geq\tau$.
Hence
$$\ddt\left(\lam e^{\int_{\tau}^t\left(\frac{1}{3}A^2-A^2B\right) dt}\right)\leq 0.$$
It follows from the integration that
\begin{equation}\label{he4}\lam(t)\leq \lam(\tau)\left( \frac{1-\frac{8A_0^2}{3}t}{1-\frac{8A_0^2}{3}\tau}\right)^{\frac{1}{8}}
e^{\frac{3B_0}{2}\left[\left(1-\frac{8A_0^2\tau}{3}\right)^{1/4}-\left(1-\frac{8A_0^2t}{3}\right)^{1/4}\right]}.\end{equation}
 According to  (\ref{he1}), (\ref{he2}), (\ref{he3}) and (\ref{he4}), we get
$$\ddt \lam\geq -c_1A^5.$$
Integration on both sides of the above inequality from $\tau$
to $t$ gives
$$\lam(t)\geq \lam(\tau)+\frac{c_1A_0^3}{4\left(1-\frac{8A_0^2}{3}\tau\right)^{3/2}}-\frac{c_1A_0^3}{4\left(1-\frac{8A_0^2}{3}t\right)^{3/2}}.$$
It can be easily checked that $\lam^{+}(t)=\max\{\lam(t),0\}$ goes to $0$ as $t$ goes to $\frac{3}{8A_0^2}$.
This finishes the proof.

\end{proof}

\section{$\text{SU}(2)$}

There exists a Milnor frame for a given metric $g_{0}$ such that
$$\left[X_2,X_3\right]=2X_1,\,\,\,\left[X_3,X_1\right]=2X_2,\,\,\,\left[X_1,X_2\right]=2X_3.$$
 Under the normalization $A_0B_0C_0=4$, we have
\begin{equation}\label{se1}
R_{11}=\frac{1}{2}A[A^{2}-(B-C)^{2}],\,
R_{22}=\frac{1}{2}B[B^{2}-(A-C)^{2}],\,
R_{33}=\frac{1}{2}C[C^{2}-(A-B)^{2}]\,
\end{equation}
and
\begin{equation}\label{se2}R=\frac{1}{2}[A^{2}-(B-C)^{2}]+\frac{1}{2}[B^{2}-(A-C)^{2}]+\frac{1}{2}[C^{2}-(A-B)^{2}].\end{equation}

We recall  Cao's results under the assumption $A_0\geq B_0\geq C_0$ in \cite{CS} .
\begin{theorem}\noindent

(1) If $A_0=B_0=C_0$, there holds  $T_+=\infty$ and $g(t)=g_0$.

(2) If $A_0=B_0>C_0$,  there holds $T_+=\infty$, $A=B>C$ and

$$A\sim\frac{8}{3}t,\;\;\;C\sim\frac{9}{16}t^{-2},$$ as $t$ goes to $T_+$.

(3) If $A_0>B_0\geq C_0$, there holds $T_+<\infty$, $A>B\geq C$ and

$$A\sim\frac{\sqrt{6}}{4}(T_+-t)^{-1/2},\,B\sim\eta_1(T_+-t)^{1/4},\,C\sim\eta_2(T_+-t)^{1/4},$$ where $\eta_1,\,\eta_2$ are two positive constants.

In the third case, $\bar{g}(t)=(B_0/B(t))g(t)$ converges to a sub-Riemannian geometry.
\end{theorem}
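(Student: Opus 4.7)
The plan is to analyze the ODE system satisfied by $(A,B,C)$ under the normalized backward Ricci flow on $\text{SU}(2)$, using the Milnor-frame Ricci components (\ref{se1}) and scalar curvature (\ref{se2}). A preliminary observation is that the volume $ABC$ is conserved: since $\ddt\log(ABC)=2(R_{11}/A+R_{22}/B+R_{33}/C)-2R$ and the trace identity gives $R_{11}/A+R_{22}/B+R_{33}/C=R$, this derivative vanishes and $ABC\equiv A_0B_0C_0=4$ throughout.

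Cases (1) and (2) reduce to manageable sub-problems. In case (1), the permutation symmetry of the initial data is preserved by uniqueness of ODE solutions, forcing $A(t)=B(t)=C(t)$; the resulting metric is Einstein ($R_{ij}=(R/3)g_{ij}$), so the right-hand side of the normalized flow vanishes identically and $g(t)\equiv g_0$. In case (2), the reflection $A\leftrightarrow B$ is preserved because $R_{11}=R_{22}$ whenever $A=B$, so $A(t)=B(t)$, and the volume constraint $A^2C=4$ collapses the system to the single autonomous equation $\ddt A=(2/3)AC(A-C)=(8/3)(1-4/A^3)$. Since $\ddt A\to 8/3$ as $A\to\infty$, one gets $T_+=\infty$ with $A\sim(8/3)t$ and $C=4/A^2\sim(9/16)t^{-2}$.

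Case (3) is the main difficulty, and I would split it into three stages. First, show that the strict ordering $A>B\geq C$ is preserved by computing the evolution equations for $A-B$ and $B-C$ and checking the signs of the right-hand sides. Second, establish the asymptotic dominance $(B+C)/A\to 0$ at the blow-up time; once this holds, (\ref{se1})--(\ref{se2}) simplify to $R_{11}\approx A^3/2$, $R_{22}\approx -A^2B/2$, $R_{33}\approx -A^2C/2$, $R\approx -A^2/2$, and the system decouples to leading order, resembling the Heisenberg case. Third, integrate the leading-order ODEs: $\ddt A\approx(4/3)A^3$ yields finite-time blow-up with $A\sim(\sqrt{6}/4)(T_+-t)^{-1/2}$, and substituting into $\ddt B\approx -(2/3)A^2B$ gives $\ddt\log B\approx -1/(4(T_+-t))$, so $B\sim\eta_1(T_+-t)^{1/4}$, and similarly $C\sim\eta_2(T_+-t)^{1/4}$.

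The hard part is making the dominance $(B+C)/A\to 0$ rigorous with enough error control to extract the exact exponents. A natural strategy is to pass to rescaled variables $\xi=B/A$, $\eta=C/A$ in the new time $\tau=-\log(T_+-t)$, verify via a Lyapunov argument exploiting $ABC=4$ that $(\xi,\eta)\to(0,0)$ is the stable equilibrium at infinity, and then linearize near this equilibrium to obtain the quantitative rates. The sub-Riemannian convergence of $\bar g(t)=(B_0/B(t))g(t)$ is then a direct consequence: the rescaled $A$-component $(B_0/B)A$ blows up, while the $B$- and $C$-components stay of order one, so the metric degenerates precisely along the bracket-generating direction $X_1=[X_2,X_3]/2$.
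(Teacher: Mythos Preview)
The paper does not prove this theorem at all; it is quoted verbatim from Cao--Saloff-Coste \cite{CS} as background (``We recall Cao's results\ldots''), so there is no proof in the paper to compare your proposal against.

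That said, your proposal is a sound sketch of how the result is actually established in \cite{CS}. Your treatments of cases (1) and (2) are correct and essentially complete: the symmetry and volume-constraint reductions are exactly the right moves, and the scalar ODE $\ddt A=(8/3)(1-4/A^{3})$ does yield $T_+=\infty$, $A\sim(8/3)t$, $C=4/A^{2}\sim(9/16)t^{-2}$. For case (3), your outline mirrors the strategy in \cite{CS}: one first shows the ordering $A>B\geq C$ persists, then proves the dominance $A\gg B,C$ so that the system asymptotically reduces to the Heisenberg ODEs, from which the exponents $-1/2$ and $1/4$ drop out. The step you flag as ``the hard part''---getting $(B+C)/A\to 0$ with enough control---is indeed where the work lies; in \cite{CS} this is done by direct comparison arguments on the ratios rather than an explicit Lyapunov function, but your rescaled-variable/linearization proposal is a legitimate alternative route to the same conclusion. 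Your reading of the sub-Riemannian limit is also correct.
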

We get the following results.
\begin{theorem}
Let $\lam(t)$ be the  first eigenvalue of $-\Del+aR$. We have the following results.

(1) If $A_0=B_0=C_0$, then $\lam(t)=\lam(0)$.

(2) If $A_0=B_0>C_0$, then there holds
$$\lam(\tau)+\frac{c_2\lam(\tau)}{(1+c_1)\tau^{c_1}}(\tau^{1+c_1}-t^{1+c_1})\leq \lam(t)\leq \lam(\tau)\left(\frac{t}{\tau}\right)^{c_1}$$ if $t\geq \tau$, where $\tau$ is a fixed time, and $c_1$, $c_2$  are positive constants.

(3) If $A_0>B_0\geq C_0$, then there holds
$$\lam(\tau)+c_2\left[(T_+-\tau)^{-3/2}-(T_+-t)^{-3/2}\right]\leq\lam(t)\leq \lam(\tau)\left(\frac{T_+-t}{T_+-\tau}\right)^{c_1}$$ if $t\geq \tau$, where $\tau$ is a fixed time, and $c_1$, $c_2$  are positive constants. In this case, $\lam^+(t)\rightarrow 0$ as $t\rightarrow T_+$.
\end{theorem}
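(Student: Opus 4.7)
The plan is to mirror the Heisenberg argument and split according to Cao's three cases. Case (1), $A_0=B_0=C_0$, is immediate: the cited Cao result gives $g(t)\equiv g_0$, so $\lam(t)\equiv\lam(0)$. For (2) and (3) I would substitute (\ref{se1})--(\ref{se2}) into (\ref{eve}) and bracket the term $\int R_{ij}\na_i u\na_j u\,d\mu$ above and below by extremal components $R_{\alpha\alpha}$ and $R_{\beta\beta}$ of the Milnor-frame Ricci tensor, just as is done between (\ref{he1}) and (\ref{he3}). The identity $\int|\na u|^2\,d\mu=\lam-aR$, which holds because $R$ is spatially constant on a locally homogeneous manifold, turns (\ref{eve}) into the two scalar differential inequalities
\begin{equation*}
\left(\tfrac{2}{3}R-2R_{\alpha\alpha}\right)\lam+2aR_{\alpha\alpha}R-2a|\Rc|^2 \le \ddt\lam \le \left(\tfrac{2}{3}R-2R_{\beta\beta}\right)\lam+2aR_{\beta\beta}R-2a|\Rc|^2.
\end{equation*}

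In case (2) the identity $A=B$ collapses (\ref{se1}) to $R_{11}=R_{22}=\tfrac12 A[A^2-(A-C)^2]$ and $R_{33}=\tfrac12 C^3$, and Cao's asymptotics $A\sim\tfrac{8}{3}t$, $C\sim\tfrac{9}{16}t^{-2}$ make $\tfrac{2}{3}R-2R_{\beta\beta}$ of definite sign of order $1/t$ while the forcing $2aR_{\beta\beta}R-2a|\Rc|^2$ decays faster; integration of $\ddt(\log\lam)$ then produces the upper bound $\lam(t)\le\lam(\tau)(t/\tau)^{c_1}$, and feeding it back into the lower inequality and integrating gives the lower bound with the $\tau^{1+c_1}-t^{1+c_1}$ correction. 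Case (3) applies the same scheme near $T_+$: with $A\sim\tfrac{\sqrt 6}{4}(T_+-t)^{-1/2}$ and $B,C\sim(T_+-t)^{1/4}$ one obtains $R\sim-\tfrac12 A^2$ and $|\Rc|^2\sim\tfrac34 A^4$, so the coefficient of $\lam$ in the upper inequality is of order $(T_+-t)^{-1}$ with the right sign, and the forcing $2aR_{\beta\beta}R-2a|\Rc|^2$ becomes nonpositive for $t\ge\tau$ close to $T_+$ (the analogue of the Heisenberg step showing $-\tfrac{3a}{2}A^4+2aR_{22}R<0$). Integration yields $\lam(t)\le\lam(\tau)((T_+-t)/(T_+-\tau))^{c_1}$, and the crude lower bound $\ddt\lam\ge-c_2 A^5$ integrates to the $(T_+-t)^{-3/2}$ correction. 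The claim $\lam^+(t)\to 0$ then follows from the upper bound alone, since it forces $\limsup_{t\to T_+}\lam(t)\le 0$.

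The main obstacle is choosing the correct extremal indices $\alpha,\beta$ in each regime so that $\tfrac{2}{3}R-2R_{\beta\beta}$ has the sign suited to Gr\"onwall integration, and verifying that the forcing term has a usable sign past some $\tau$; once this book-keeping is done, the integrations are mechanical and directly parallel those of the Heisenberg case.
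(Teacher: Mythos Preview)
Your proposal is correct and follows essentially the same route as the paper: bracket $\int R_{ij}\na_iu\na_ju\,d\mu$ by the extremal Ricci components to obtain the two-sided scalar inequality, use Cao's asymptotics to see that the forcing term $2aR_{\beta\beta}R-2a|\Rc|^2$ is eventually nonpositive (in case (2) because $|\Rc|^2\sim t^{-2}$ dominates $R_{33}R\sim t^{-7}$, in case (3) because $|\Rc|^2\sim(T_+-t)^{-2}$ dominates $R_{22}R\sim(T_+-t)^{-7/4}$), integrate the resulting Gr\"onwall inequality for the upper bound, and then feed that upper bound back into the lower inequality. The only point you leave implicit---the precise ordering $R_{11}=R_{22}>R_{33}>0$ in case (2) and $R_{11}>R_{33}\ge R_{22}$ in case (3), which fixes your indices $\alpha,\beta$---is exactly what the paper supplies by citing Theorem~4.2 of \cite{Hou1}.
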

\begin{proof}
\noindent

 (1) If $A_0=B_0=C_0$, then $\lam(t)$ is a constant.

(2) If $A_0=B_0>C_0$, Theorem 4.2 in \cite{Hou1} implies
$$R_{11}=R_{22}>R_{33}>0$$
  after a time $\tau$.
Then it follows that
\begin{equation}\label{se3}\frac{2}{3}R\lam -2R_{11}\lam-2a|\Rc|^2+2aR_{11}R\leq\ddt \lam\leq \frac{2}{3}R\lam -2R_{33}\lam-2a|\Rc|^2+2aR_{33}R.\end{equation} By  employing  (\ref{se1}), (\ref{se2}) and the second item in Theorem 4,  we have
\begin{equation}\label{se4}2a|\Rc|^2=\frac{a}{2}\left[(2BC-C^2)^2+(2BC-C^2)^2+C^4\right]\sim c_1t^{-2},\end{equation}
\begin{equation}\label{se5}2aR_{11}R=\frac{aA}{2}(2BC-C^2)\left[ 2BC-C^2+2AC-C^2+C^2\right]\sim c_2t^{-1},\end{equation}
$$\frac{2}{3}R\lam -2R_{11}\lam=\left(\frac{4}{3}BC-\frac{1}{3}C^2-2ABC+AC^2\right)\lam\sim -8\lam,$$
$$\frac{2}{3}R\lam -2R_{33}\lam=\left(\frac{4}{3}BC-\frac{1}{3}C^2-C^3\right)\lam\sim 2t^{-1}\lam ,$$
\begin{equation}\label{se6}2aR_{33}R\sim c_3t^{-7}.\end{equation}
Hence by (\ref{se3}), (\ref{se4}) and (\ref{se6}), we get
$$\ddt \lam\leq \left(\frac{4}{3}BC-\frac{1}{3}C^2-C^3\right)\lam.$$
Denoting  $D=\frac{4}{3}BC-\frac{1}{3}C^2-C^3$,  we obtain
$$\ddt\left(\lam e^{\int_{\tau}^t-Ddt}\right)\leq 0$$ if $t
\geq \tau$.
It is easy to see that
\begin{equation}\label{se7}\lam(t)\leq \lam(\tau)\left(\frac{t}{\tau}\right)^{c_1}.\end{equation}
Hence  (\ref{se3}) together with (\ref{se4}), (\ref{se5}) and (\ref{se7}) leads to
$$-c_2\lam(\tau)\left(\frac{t}{\tau}\right)^{c_1}\leq\left(\frac{4}{3}BC-\frac{1}{3}C^2-2ABC+AC^2\right)\lam(\tau)\left(\frac{t}{\tau}\right)^{c_1}\leq \ddt\lam.$$
Integration from $\tau$  to $t$ gives $$\lam(\tau)+\frac{c_2\lam(\tau)}{(1+c_1)\tau^{c_1}}(\tau^{1+c_1}-t^{1+c_1})\leq \lam(t).$$
$$$$

(3) If  $A_0>B_0\geq C_0$, it follows from  Theorem 4.2 in \cite{Hou1} that
$$R_{11}>0,\,\,R_{22}<0,\,\,R_{33}<0$$
and $$R_{11}>R_{33}\geq R_{22}$$
after a time $\tau$.
Hence
\begin{equation}\label{se8}\frac{2}{3}R\lam -2R_{11}\lam-2a|\Rc|^2+2aR_{11}R\leq\ddt \lam\leq \frac{2}{3}R\lam -2R_{22}\lam-2a|\Rc|^2+2aR_{22}R.\end{equation}
Using (4.1), (4.2) and the third item in Theorem 4, we get

\begin{equation}\label{se9}2a|\Rc|^2=\frac{a}{2}\left\{\left[A^2-(B-C)^2\right]^2+\left[B^2-(A-C)^2\right]^2+\left[C^2-(A-B)^2\right]^2\right\}\sim c_1(T_+-t)^{-2},\end{equation}
\begin{equation}\label{se10} 2aR_{11}R=\frac{aA}{2}\left[A^2-(B-C)^2\right]\left(2AB+2AC+2BC-A^2-B^2-C^2\right)\sim-c_2(T_+-t)^{-5/2},\end{equation}
$$\frac{2}{3}R\lam -2R_{11}\lam\sim -c_3 (T_+-t)^{-3/2}\lam,$$
$$\frac{2}{3}R\lam -2R_{22}\lam\sim -c_4(T_+-t)^{-1}\lam,$$
\begin{equation}\label{se11} 2aR_{22}R\sim c_5(T_+-t)^{-7/4}.\end{equation}
By (\ref{se8}), (\ref{se9}) and (\ref{se11}), we obtain
$$\ddt\lam\leq \frac{2}{3}R\lam -2R_{22}\lam$$ if $t\geq \tau$.
Integrating from $\tau$ to $t$, we have
\begin{equation}\label{se12}\lam(t)\leq \lam(\tau) e^{\int_{\tau}^{t}\left(\frac{2}{3}R-2R_{22}\right)dt}\leq \lam(\tau)\left(\frac{T_+-t}{T_+-\tau}\right)^{c_1},\end{equation}
which together with (\ref{se8}), (\ref{se9}) and (\ref{se10}) implies
$$-c_2(T_+-t)^{-5/2}\leq \ddt\lam.$$
By integration, we obtain
$$\lam(\tau)+c_2\left[(T_+-\tau)^{-3/2}-(T_+-t)^{-3/2}\right]\leq \lam(t).$$
Finally, it is true that  $\lam^+(t)\rightarrow 0$ as $t\rightarrow T_+$.
\end{proof}

\section{$E(1,1)$}
Given a metric $g_0$,  we have  a fixed Milnor frame such that
$$\left[X_2,X_3\right]=2X_1,\,\,\,\left[X_3,X_1\right]=0\,\,\,\left[X_1,X_2\right]=-2X_3.$$
Choosing the normalization $A_0B_0C_0=4$, one has
\begin{equation}\label{ee1}
R_{11}=\frac{1}{2}A(A^2-C^2),\,
R_{22}=-\frac{1}{2}B(A+C)^2,\,
R_{33}=\frac{1}{2}C(C^2-A^2),\,
R=-\frac{1}{2}(A+C)^{2}.
\end{equation}

Suppose $A_0\geq C_0$. Cao \cite{CS} described  the following behaviors.
\begin{theorem}\noindent

(1) If $A_0=C_0$, then we have $T_+=\frac{3}{32}B_0$ and
$$A(t)=C(t)=\frac{\sqrt{6}}{4}(T_+-t)^{-1/2},\;\;B(t)=\frac{32}{3}(T_+-t),\;\;t\in[0,T_+).$$

(2) If $A_0>C_0$, then we have  $T_+<\infty$, and
$$A\sim\frac{\sqrt{6}}{4}(T_+-t)^{-1/2},\,B(t)\sim\eta_1(T_+-t)^{1/4},\,C(t)\sim\eta_2(T_+-t)^{1/4},$$
as $t$ goes to $T_+$, where $\eta_1,\eta_2$ are two positive constants.

In the second case, $\bar{g}(t)=(B_0/B(t))g(t)$ converges to a sub-Riemannian geometry.
\end{theorem}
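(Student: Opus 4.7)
The strategy is to reduce the backward Ricci flow to an explicit ODE system in $(A,B,C)$ via the Milnor frame and then to analyze the two cases separately. Plugging the Ricci components from (\ref{ee1}) into $\partial_t g_{ij}=2R_{ij}-\tfrac{2R}{3}g_{ij}$ and simplifying gives
\begin{equation*}
\ddt A = \tfrac{2}{3}A(A+C)(2A-C),\quad \ddt B = -\tfrac{2}{3}(A+C)^2 B,\quad \ddt C = \tfrac{2}{3}C(A+C)(2C-A),
\end{equation*}
and a direct bookkeeping check then yields $\tfrac{d}{dt}(ABC)=0$, so the normalization $ABC=4$ is preserved along the flow. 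This volume-type conservation law is the key pinning relation I would use throughout.

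In case (1), the system is invariant under the swap $A\leftrightarrow C$, so $A_0=C_0$ together with uniqueness of ODE solutions forces $A(t)\equiv C(t)$, and the system collapses to $\ddt A=\tfrac{4}{3}A^3$. Direct integration gives $A^2=A_0^2/(1-\tfrac{8}{3}A_0^2 t)$, from which I read off the blow-up time $T_+=3/(8A_0^2)=3B_0/32$ (using $A_0^2B_0=4$) and the explicit form $A=C=\tfrac{\sqrt{6}}{4}(T_+-t)^{-1/2}$. The component $B$ is then recovered by solving the scalar linear ODE $\ddt B=-(T_+-t)^{-1}B$, which produces $B=\tfrac{32}{3}(T_+-t)$.

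For case (2), the extra ingredient I would invoke is the monotonicity
\begin{equation*}
\ddt\ln(A/C)=2(A+C)(A-C),
\end{equation*}
which is strictly positive whenever $A>C$. Combined with $ABC=4$ and the observation that once $A>2C$ the right-hand side is bounded below by $A^2$, this drives $C/A\to 0$ near the singular time, so the leading behavior of the $A$-equation matches the Heisenberg case $\ddt A\sim\tfrac{4}{3}A^3$, yielding $T_+<\infty$ and $A\sim\tfrac{\sqrt{6}}{4}(T_+-t)^{-1/2}$. Integrating $\ddt\ln B=-\tfrac{2}{3}(A+C)^2\sim -\tfrac{1}{4}(T_+-t)^{-1}$ then gives $B\sim\eta_1(T_+-t)^{1/4}$, and $C=4/(AB)$ provides the corresponding rate for $C$. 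The main obstacle is upgrading this formal asymptotic matching to rigorous estimates; I would handle it by a bootstrap, where a preliminary upper bound on $C/A$ sharpens the estimate on $A$, which in turn pins down the rates of $B$ and $C$.

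Finally, the sub-Riemannian convergence in case (2) follows quickly from the above rates: the rescaled components satisfy $\bar A=(B_0/B)A\to\infty$ while $\bar C=(B_0/B)C$ tends to a finite positive limit, and $\bar B=B_0$ is constant. The $X_1\otimes X_1$ coefficient of the rescaled metric thus blows up while $\bar B$ and $\bar C$ remain bounded, which forces motion transverse to $\mathrm{span}(X_2,X_3)$ to become infinitely expensive and produces a sub-Riemannian limit on the horizontal distribution spanned by $X_2,X_3$, whose bracket $[X_2,X_3]=2X_1$ supplies the required bracket-generating condition.
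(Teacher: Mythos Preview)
The paper does not prove this theorem; it is quoted from Cao and Saloff--Coste \cite{CS} as background, so there is no in-paper argument to compare against directly. Your derivation of the ODE system from (\ref{ee1}), the conservation law $ABC\equiv 4$, and your treatment of case~(1) are all correct and complete: the symmetry reduction via uniqueness, the explicit integration of $\ddt A=\tfrac{4}{3}A^3$, and the recovery of $B$ (either from $ABC=4$ or from the linear ODE you wrote) reproduce the stated formulas exactly.

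For case~(2) your formal matching is sound, and you have correctly located the real difficulty. One rigorous piece you can insert immediately: from $A\geq C>0$ one has $\tfrac{2}{3}A^3\leq \ddt A\leq \tfrac{8}{3}A^3$, which already forces $T_+<\infty$ and traps $A$ between two constant multiples of $(T_+-t)^{-1/2}$. Feeding these crude bounds into $\ddt\ln B=-\tfrac{2}{3}(A+C)^2$ and then into $C=4/(AB)$ gives preliminary power bounds on $B$ and $C$. The delicate step is upgrading the leading constant in $A$ to exactly $\sqrt{6}/4$; your proposed bootstrap (use the decay of $C/A$ to rewrite $\ddt A=(\tfrac{4}{3}+o(1))A^3$) is precisely the mechanism used in \cite{CS}, but it requires showing $C/A\to 0$ with a quantitative rate, which follows once you combine $\ddt\ln(A/C)=2(A^2-C^2)$ with the lower bound $A\geq c(T_+-t)^{-1/2}$. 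Your sub-Riemannian conclusion is correct once the rates are in hand.
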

We investigate the eigenvalue and get the following results.
\begin{theorem}
Let $\lam(t)$ be the  first eigenvalue of $-\Del+aR$. Then we get

(1) If $A_0=C_0$, then we have
$$\lam(\tau)+c_1\left[(T_+-\tau)^{-1}-(T_+-t)^{-1}\right]\leq \lam(t)\leq \lam(\tau)\left(\frac{T_+-t}{T_+-\tau}\right)^{1/2}e^{16(t-\tau)}$$ if $t\geq \tau$,
where $\tau$  is a fixed time and $c_1$ is a positive constant.

(2) If $A_0>C_0\geq B_0$, then for $t\geq \tau$,  there holds

$$\lam(\tau)+c_2\left[(T_+-\tau)^{-\frac{3}{2}}-(T_+-t)^{-\frac{3}{2}}\right]\leq \lam(t)\leq \lam(\tau)\left(\frac{T_+-t}{T_+-\tau}\right)^{c_1}.$$
In both cases, $\lam^+(t)\rightarrow 0$ as $t\rightarrow T_+$.
\end{theorem}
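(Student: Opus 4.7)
The plan is to imitate the two-case analysis already carried out for $\text{SU}(2)$ in Theorem 5, namely: start from the evolution formula (\ref{eve}), bound the gradient-Ricci integral by the smallest and largest eigenvalues of the Ricci tensor to get a two-sided ODE inequality for $\lam(t)$, then plug in the explicit solutions for $A,B,C$ in Theorem 6 and integrate. The main obstacle is that the sign of the term $-2a|\Rc|^2+2aR_{ii}R$ depends sharply on which component $R_{ii}$ one uses and on the asymptotic behaviour of $A,B,C$; I will need to pick the right component in each case so that this term can be dropped.

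For case (1), $A_0=C_0$, I would first record that along the flow $A=C=\tfrac{\sqrt{6}}{4}(T_+-t)^{-1/2}$ and $B=\tfrac{32}{3}(T_+-t)$, which by (\ref{ee1}) gives $R_{11}=R_{33}=0$, $R_{22}=-2BA^2=-8$, $R=-2A^2=-\tfrac{3}{4}(T_+-t)^{-1}$, and $|\Rc|^2=4A^4=\tfrac{9}{16}(T_+-t)^{-2}$. From (\ref{eve}) I get the two-sided bound, in the style of (\ref{se3}) and (\ref{se8}),
\begin{equation*}
\tfrac{2}{3}R\lam-2R_{11}\lam-2a|\Rc|^2+2aR_{11}R\le\ddt\lam\le\tfrac{2}{3}R\lam-2R_{22}\lam-2a|\Rc|^2+2aR_{22}R.
\end{equation*}
Since $-2a|\Rc|^2+2aR_{22}R=-\tfrac{9a}{8}(T_+-t)^{-2}+12a(T_+-t)^{-1}\le 0$ after some time $\tau$, I drop it to obtain $\ddt\lam\le\bigl(-\tfrac{1}{2}(T_+-t)^{-1}+16\bigr)\lam$, and direct integration produces the upper bound $\lam(\tau)((T_+-t)/(T_+-\tau))^{1/2}e^{16(t-\tau)}$. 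For the lower bound I use $R_{11}=0$ to kill the middle terms, giving $\ddt\lam\ge\tfrac{2}{3}R\lam-2a|\Rc|^2$; combined with the just-established upper bound on $\lam$, which shows $|\lam|$ is bounded near $T_+$, this reduces to $\ddt\lam\ge -c_1(T_+-t)^{-2}$, and integration from $\tau$ to $t$ yields the stated lower bound.

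For case (2), $A_0>C_0\ge B_0$, I use Cao's asymptotics in Theorem 6(2): $A\sim(T_+-t)^{-1/2}$, $B,C\sim(T_+-t)^{1/4}$. From (\ref{ee1}) I read off $R_{11}>0$ (of order $A^3$), $R_{22},R_{33}<0$, and the assumption $C\ge B$ gives $R_{33}\le R_{22}<0<R_{11}$ after some $\tau$. So I use the analogue of (\ref{se8}) with $R_{11}$ on the left and $R_{33}$ on the right. The orders of magnitude are $|\Rc|^2\sim(T_+-t)^{-2}$, $R_{11}R\sim -(T_+-t)^{-5/2}$, $R_{33}R\sim(T_+-t)^{-7/4}$, and $\tfrac{2}{3}R-2R_{33}\sim -(T_+-t)^{-1}$, $\tfrac{2}{3}R-2R_{11}\sim -(T_+-t)^{-3/2}$. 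On the right, $-2a|\Rc|^2+2aR_{33}R\le 0$ near $T_+$, so dropping it gives $\ddt\lam\le -c(T_+-t)^{-1}\lam$, which integrates to $\lam(\tau)((T_+-t)/(T_+-\tau))^{c_1}$. On the left, the dominant term is $-2a|\Rc|^2+2aR_{11}R\sim -c(T_+-t)^{-5/2}$; combined with the $\lam$-bound just obtained, I get $\ddt\lam\ge -c_2(T_+-t)^{-5/2}$, and integration produces the claimed $(T_+-t)^{-3/2}$ lower bound.

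Finally, in both cases the explicit upper bounds tend to $0$ as $t\to T_+$ (the factor $((T_+-t)/(T_+-\tau))^{1/2}$ beats $e^{16(t-\tau)}$ on the bounded interval $[\tau,T_+)$ in case (1), and the polynomial factor in case (2) is decreasing), so $\lam^+(t)\to 0$. The key delicate point throughout is verifying the sign of $-2a|\Rc|^2+2aR_{ii}R$ for the appropriate $i$; once that is done, the integrations are elementary and follow the template already established in Sections 3 and 4.
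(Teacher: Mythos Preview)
Your proposal is correct and follows essentially the same approach as the paper: bound (\ref{eve}) by the extreme Ricci eigenvalues, check the sign of $-2a|\Rc|^2+2aR_{ii}R$ using the asymptotics of Theorem~6, drop it, and integrate. The only cosmetic difference is that in case~(2) the paper quotes \cite{Hou1} for the ordering $R_{11}>R_{22}>R_{33}$ rather than deriving it from $C\ge B$, and in case~(1) your claim that ``$|\lam|$ is bounded'' is more than you need (and more than the upper bound gives)---only the upper bound on $\lam$ is required since $R<0$, which is exactly what the paper uses.
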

\begin{remark}
If $A_0>B_0> C_0$,  the analogous estimates hold.
\end{remark}
\begin{proof}
(1) If $A_0=C_0$,  then (5.1) together with  the first item in Theorem 6 implies

$$R_{11}=0,\,\,R_{22}<0,\,\,R_{33}=0$$
and  $$R_{11}=R_{33}>R_{22}.$$
Then we have \begin{equation}\label{ee2}\frac{2}{3}R\lam -2a|\Rc|^2\leq\ddt \lam\leq \frac{2}{3}R\lam -2R_{22}\lam-2a|\Rc|^2+2aR_{22}R.\end{equation}
There exist a time $\tau$ such that
\begin{equation}\label{ee3}2a|\Rc|^2=\frac{a}{2}\left[(A^2-C^2)^2+(A+C)^4+(C^2-A^2)^2\right]\sim c_1(T_+-t)^{-2},\end{equation}
\begin{equation}\label{ee4}2aR_{22}R=\frac{a}{2}B(A+C)^4\sim c_2(T_+-t)^{-1}\end{equation} if $t\geq \tau$.
Hence it follows from (\ref{ee2}), (\ref{ee3}) and (\ref{ee4}) that
$$\ddt\lam \leq \frac{2}{3}R\lam -2R_{22}\lam=\left(-\frac{1}{2}(T_+-t)^{-1}+16\right)\lam$$
for $t\geq \tau$.
This leads to
$$\ddt\left(\lam e^{\int_{\tau}^t\left(\frac{1}{2}(T_+-t)^{-1}-16\right)dt}\right)\leq 0.$$
Integration from $\tau$ to $t$ gives
\begin{equation}\label{ee5}\lam(t)\leq \lam(\tau)\left(\frac{T_+-t}{T_+-\tau}\right)^{1/2}e^{16(t-\tau)}.\end{equation}
By (\ref{ee2}), (\ref{ee3}) and (\ref{ee5}),  we get

 $$-c_1(T_+-t)^{-2}\leq \ddt \lam $$
 after a time $\tau$. It is concluded by integration that
 $$\lam(\tau)+c_1\left[(T_+-\tau)^{-1}-(T_+-t)^{-1}\right]\leq \lam(t).$$

(2)  If $A_0>C_0\geq B_0$, by Theorem 5.2 in \cite{Hou1} we have
$$R_{11}>0,\,\,R_{22}<0,\,\,R_{33}<0 $$
and $$R_{11}>R_{22}>R_{33}$$ after a time $\tau$.

 It follows that
 \begin{equation}\label{ee6}\frac{2}{3}R\lam -2R_{11}\lam-2a|\Rc|^2+2aR_{11}R\leq\ddt \lam\leq \frac{2}{3}R\lam -2R_{33}\lam-2a|\Rc|^2+2aR_{33}R.\end{equation}
Combining (5.1) and the second item in Theorem 6, we calculate
\begin{equation}\label{ee7}2a|\Rc|^2=\frac{a}{2}\left[(A^2-C^2)^2+(A+C)^4+(C^2-A^2)^2\right]\sim c_1(T_+-t)^{-2},\end{equation}
\begin{equation}\label{ee8}2aR_{33}R=\frac{a}{2}C(A+C)^2(A^2-C^2)\sim c_2(T_+-t)^{-\frac{7}{4}},\end{equation}
\begin{equation}\label{ee9}2aR_{11}R=-\frac{a}{2}A(A^2-C^2)(A+C)^2\sim -c_3(T_+-t)^{-\frac{5}{2}}.\end{equation}
By (\ref{ee6}), (\ref{ee7}) and (\ref{ee8}), we obtain
$$\ddt \lam\leq \frac{2}{3}R\lam -2R_{33}\lam$$
for $t\geq \tau$.

Integration from $\tau$ to $t$ yields
\begin{equation}\label{ee10}\lam(t)\leq \lam(\tau) e^{\int_{\tau}^t(\frac{2}{3}R-2R_{33})dt}\leq \lam(\tau)\left(\frac{T_+-t}{T_+-\tau}\right)^{c_1}.\end{equation}
It is clear that $\lam^+(t)\rightarrow 0$ as $t\rightarrow T_+$.  Using (\ref{ee6}), (\ref{ee7}), (\ref{ee9}) and (\ref{ee10}),  we immediately get
$$-c_2(T_+-t)^{-\frac{5}{2}}\leq \ddt\lam.$$
It follows from the integration that
$$\lam(\tau)+c_2\left[(T_+-\tau)^{-\frac{3}{2}}-(T_+-t)^{-\frac{3}{2}}\right]\leq \lam(t).$$

\end{proof}

\section{$E(2)$}
 There is a  Milnor frame for a metric $g_0$ such that
$$\left[X_2,X_3\right]=2X_1,\,\,\,\left[X_3,X_1\right]=2X_2,\,\,\,\left[X_1,X_2\right]=0.$$
Choosing the normalization $A_0B_0C_0=4$, then we have

\begin{equation}\label{es61}
R_{11}=\frac{1}{2}A(A^2-B^2),\,
R_{22}=\frac{1}{2}B(B^2-A^2),\,
R_{33}=-\frac{1}{2}C(A-B)^2,\,
R=-\frac{1}{2}(A-B)^{2}.
\end{equation}
Cao \cite{CS} gave  the following results.

\begin{theorem}\noindent

(1) If $A_0=B_0$, there holds  $T_+=\infty$, and $g(t)=g_0$.

(2) If $A_0>B_0$, then there holds $T_+<\infty$, and
$$A\sim\frac{\sqrt{6}}{4}(T_+-t)^{-1/2},\,B(t)\sim\eta_1(T_+-t)^{1/4},\,C(t)\sim\eta_2(T_+-t)^{1/4}$$

as $t$ goes to $T_+$, where $\eta_1,\,\eta_2$ are two positive constants.

In the second case, $\bar{g}(t)=(B_0/B(t))g(t)$ converges to a sub-Riemannian geometry.
\end{theorem}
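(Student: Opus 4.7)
The plan is to reduce the backward Ricci flow on $E(2)$ to an ODE system in $(A,B,C)$ via the Milnor frame, and then analyze that system directly. Combining $\ppt g=2\Rc-\tfrac{2r}{3}g$ with the Ricci components in (\ref{es61}) and $r=R=-\tfrac{1}{2}(A-B)^2$, I would derive
\begin{align*}
\ddt A &= A(A^2-B^2)+\tfrac{1}{3}A(A-B)^2,\\
\ddt B &= B(B^2-A^2)+\tfrac{1}{3}B(A-B)^2,\\
\ddt C &= -\tfrac{2}{3}C(A-B)^2.
\end{align*}
Summing $\ddt\log A+\ddt\log B+\ddt\log C$ yields $0$, so the normalization $ABC=4$ is preserved along the flow; this structural identity is the workhorse for everything that follows.

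For case (1), $A_0=B_0$ makes every right-hand side vanish at $t=0$. By uniqueness of ODE solutions, the symmetric reduction $A(t)\equiv B(t)$ persists, the Ricci tensor stays identically zero, and $(A,B,C)(t)\equiv(A_0,B_0,C_0)$, giving $T_+=\infty$; this case is essentially a fixed point of the reduced flow.

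For case (2), $A_0>B_0$, I would begin with the monotonicity $\ddt\log(A/B)=2(A^2-B^2)>0$, which preserves $A>B$ and makes $A/B$ strictly increasing. To force finite-time blow-up, set $P=A^2-B^2$ and compute $\ddt P\geq 2A^2P\geq 2P^2$, so $P$, and hence $A$, blow up at some $T_+<\infty$. To extract the precise rates, I would first show $B/A\to 0$ using that $\ddt\log B\sim-\tfrac{2}{3}A^2$ once $A\gg B$; this promotes the equation for $A$ to $\ddt A\sim\tfrac{4}{3}A^3$, integrating to $A\sim\tfrac{\sqrt{6}}{4}(T_+-t)^{-1/2}$. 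Inserting this rate into $\ddt\log B$ and $\ddt\log C$ and integrating then produces the common $(T_+-t)^{1/4}$ decay with positive constants $\eta_1,\eta_2$. Convergence of $\bar g(t)=(B_0/B(t))g(t)$ to a sub-Riemannian limit finally follows from $\bar A=B_0A/B\to\infty$ together with $\bar C=4B_0/(AB^2)$ tending to a finite constant, since $AB^2$ stabilizes under the derived rates (here I use $ABC=4$).

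The main obstacle is the bootstrap hidden inside case (2): the leading-order rate for $A$ requires $(A-B)^2/A^2\to 1$, which demands $B/A\to 0$, and this in turn is quantified by the blow-up rate of $A$. I would close this loop either by constructing an explicit combined monotone quantity in $P$ and $\log(A/B)$, or by a comparison argument against the pure Heisenberg ODE $\ddt A=\tfrac{4}{3}A^3$ from Section 3, whose closed-form solution (see (\ref{he2})) already exhibits the $(T_+-t)^{-1/2}$ profile and supplies clean sub- and super-solutions for $A$ that then propagate through the $B$ and $C$ equations.
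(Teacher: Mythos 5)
Your proposal is correct in outline, but note that the paper does not prove this statement at all: it is quoted verbatim from Cao--Saloff-Coste \cite{CS} (``Cao \cite{CS} gave the following results''), so you are supplying a proof where the paper supplies a citation. Your route is essentially the standard one from \cite{CS}: reduce to the ODE system in $(A,B,C)$, observe that $ABC$ is conserved (which is also what keeps the curvature formulas (\ref{es61}) valid along the flow), treat $A_0=B_0$ as a fixed point, and in the generic case combine a blow-up argument with asymptotic integration. Your computations check out: $\ddt\log(A/B)=2(A^2-B^2)$, $\ddt P\ge 2(A^2+B^2)P\ge 2P^2$ for $P=A^2-B^2$, and the limiting equation $\ddt A\sim\frac43A^3$ does give $\frac{\sqrt6}{4}(T_+-t)^{-1/2}$.

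The bootstrap you flag as the main obstacle closes more easily than you suggest, and you do not need a comparison with the Heisenberg ODE. From $\ddt\log B=(B^2-A^2)+\frac13(A-B)^2=-\frac13(A-B)(2A+4B)<0$ you get that $B$ is strictly decreasing, hence bounded by $B_0$; likewise $AB$ is increasing, so $C=4/(AB)$ is bounded. The continuation criterion then forces $A\to\infty$ as $t\to T_+$, whence $B/A\to0$ with no circularity. Once $A\sim\frac{\sqrt6}{4}(T_+-t)^{-1/2}$ is established, $B/A=O\bigl((T_+-t)^{1/2}\bigr)$, so the error terms in $\ddt\log B=-\frac14(T_+-t)^{-1}(1+o(1))$ and in $\ddt\log C$ are integrable up to $T_+$; this integrability is exactly what guarantees that the constants $\eta_1,\eta_2$ exist rather than merely the exponents $1/4$. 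With that one observation added, your argument is a complete and self-contained proof of the cited theorem.
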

We will prove the following theorem.
\begin{theorem}
Let $\lam(t)$ be the  first eigenvalue of $-\Del+aR$. Then we get

(1) If $A_0=B_0$, then $g(t)=g_0$, and $\lam(t)=\lam(0)$.

(2) If $A_0>B_0$ and $C_0\geq B_0$,  there holds that

$$\lam(\tau)+c_2\left[(T_+-\tau)^{-\frac{3}{2}}-(T_+-t)^{-\frac{3}{2}}\right]\leq \lam(t)\leq \lam(\tau)\left(\frac{T_+-t}{T_+-\tau}\right)^{c_1} $$ for $t\geq \tau$. In this case, $\lam^+(t)\rightarrow 0$ as $t\rightarrow T_+$
\end{theorem}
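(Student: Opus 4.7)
The proof will follow the same template used for the $\text{SU}(2)$ and $E(1,1)$ cases. For \textbf{part (1)}, setting $A_0=B_0$ in $(6.1)$ gives $R_{ij}\equiv 0$ and $R\equiv 0$, so the backward Ricci flow $(1.1)$ reads $\partial g/\partial t\equiv 0$, the metric is stationary, the operator $-\Del+aR$ coincides with $-\Del$ on $(M,g_0)$, and $\lam(t)=\lam(0)$.

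For \textbf{part (2)}, I would first use Theorem 9(2) together with the $E(2)$ analogue of the sign-and-ordering lemma in \cite{Hou1} to secure, for $t$ beyond some fixed $\tau$, the signs $R_{11}>0$, $R_{22},R_{33}<0$ and the ordering $R_{11}>R_{22}\geq R_{33}$. The relative ordering of the two negative eigenvalues is the delicate point: a direct computation from $(6.1)$ yields $R_{22}-R_{33}=\tfrac{1}{2}(A-B)[A(C-B)-B(B+C)]$, whose leading term is $\tfrac{1}{2}A^2(C-B)$, so preservation of $C\geq B$ under the ODE system (forced by $C_0\geq B_0$) gives $R_{22}\geq R_{33}$ for large $t$. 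Bounding $\int 2R_{ij}\na_i u\na_j u$ via the extreme Ricci eigenvalues and using the identity $\int|\na u|^2=\lam-aR$ (valid since $R$ is spatially constant on a locally homogeneous manifold), the evolution equation $(2.1)$ becomes
\[
\tfrac{2}{3}R\lam - 2R_{11}\lam - 2a|\Rc|^2 + 2aR_{11}R \;\leq\; \ddt\lam \;\leq\; \tfrac{2}{3}R\lam - 2R_{33}\lam - 2a|\Rc|^2 + 2aR_{33}R.
\]

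From Theorem 9(2) one reads off the asymptotics $|\Rc|^2\sim c(T_+-t)^{-2}$, $R_{33}R\sim c(T_+-t)^{-7/4}$, $\tfrac{2}{3}R\sim -\tfrac{1}{8}(T_+-t)^{-1}$, and $R_{11}R\sim -c(T_+-t)^{-5/2}$. For the upper bound the $|\Rc|^2$ and $R_{33}R$ contributions eventually have negative sum and can be discarded, and a Gr\"onwall integration of $\ddt\lam\leq(\tfrac{2}{3}R-2R_{33})\lam$ yields $\lam(t)\leq\lam(\tau)\bigl(\tfrac{T_+-t}{T_+-\tau}\bigr)^{c_1}$, which already forces $\lam^+(t)\to 0$. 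Inserting this upper bound into the left inequality, the term $2aR_{11}R$ dominates, so $\ddt\lam\geq -c_2(T_+-t)^{-5/2}$, and a direct integration from $\tau$ to $t$ delivers the stated lower bound. The main obstacle is the first step: verifying that $C\geq B$ is preserved by the ODE system so that the Ricci eigenvalue ordering stabilizes after $\tau$; everything subsequent is asymptotic bookkeeping of the kind carried out in Sections 4 and 5.
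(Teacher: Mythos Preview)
Your proposal is correct and follows essentially the same route as the paper's proof: the paper obtains the two-sided inequality $(6.2)$, computes the asymptotics $(6.3)$--$(6.5)$, drops the negative $-2a|\Rc|^2+2aR_{33}R$ contribution to integrate the Gr\"onwall upper bound $(6.6)$, and then feeds that back into the left inequality to get $\ddt\lam\geq -c_2(T_+-t)^{-5/2}$ and integrate. The only difference is that the paper does not re-derive the sign/ordering $R_{11}>R_{22}>R_{33}$ but simply quotes Theorem~6.2 of \cite{Hou1}; your direct argument via $R_{22}-R_{33}=\tfrac{1}{2}(A-B)[A(C-B)-B(B+C)]$ together with the preservation of $C\geq B$ (which indeed follows since $d(C/B)/dt=2B(A-B)(C/B)>0$) recovers exactly that statement, so the ``main obstacle'' you flag is in fact routine.
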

\begin{remark}
If $A_0>B_0$ and $C_0<B_0$, the similar estimates hold.
\end{remark}
\begin{proof}
\noindent

(1) If $A_0=B_0$, then $g(t)=g_0$, and $\lam(t)=\lam(0)$.

(2) If $A_0>B_0$, by Theorem 6.2 in \cite{Hou1}, we know that
$$R_{11}>0,\,\,R_{22}<0,\,\,R_{33}<0$$ and
$R_{11}>R_{22}>R_{33}$ with $t\geq \tau$ for some $\tau$.

It is easy to see that
\begin{equation}\label{es62}\frac{2}{3}R\lam -2R_{11}\lam-2a|\Rc|^2+2aR_{11}R\leq\ddt \lam\leq \frac{2}{3}R\lam -2R_{33}\lam-2a|\Rc|^2+2aR_{33}R\end{equation}for $t\geq \tau$.

By (\ref{es61}) and the second item in Theorem 8, we arrive at
\begin{equation}\label{es63}2a|\Rc|^2=\frac{a}{2}\left[ (A^2-B^2)^2+(B^2-A^2)^2+(A-B)^4\right]\sim c_1(T_+-t)^{-2},\end{equation}
\begin{equation}\label{es64}2aR_{33}R=\frac{a}{2}C(A-B)^4\sim c_2(T_+-t)^{-7/4},\end{equation}
\begin{equation}\label{es65}2aR_{11}R=-\frac{a}{2}A(A^2-B^2)(A-B)^2\sim -c_3(T_+-t)^{-5/2}.\end{equation}
Hence (\ref{es62}) together with (\ref{es63}) and (\ref{es64}) leads to
$$\ddt \lam\leq \frac{2}{3}R\lam -2R_{33}\lam.$$
By integration from $\tau$ to $t$, we get
\begin{equation}\label{es66}\lam(t)\leq \lam(\tau) e^{\int_{\tau}^t(\frac{2}{3}R-2R_{33})dt}\leq \lam(\tau)\left(\frac{T_+-t}{T_+-\tau}\right)^{c_1},\end{equation} which implies that
$\lam^+(t)\rightarrow 0$ as $t\rightarrow T_+$.
Observing (\ref{es62}), (\ref{es63}), (\ref{es65}) and (\ref{es66}),  we obtain

$$-c_2(T_+-t)^{-\frac{5}{2}}\leq \ddt\lam.$$
We conclude by  the integration that
$$\lam(\tau)+c_2\left[(T_+-\tau)^{-\frac{3}{2}}-(T_+-t)^{-\frac{3}{2}}\right]\leq \lam(t).$$

\end{proof}

\section{$\SL$}
This class is characterized by the Lie bracket of the Milnor frame:
$$\left[X_2,X_3\right]=-2X_1,\,\,\,\left[X_3,X_1\right]=2X_2,\,\,\,\left[X_1,X_2\right]=2X_3.$$
Under the normalization $A_0B_0C_0=4$, we have
\begin{equation}
R_{11}=\frac{1}{2}A[A^{2}-(B-C)^{2}],\,
R_{22}=\frac{1}{2}B[B^{2}-(A+C)^{2}],\,
R_{33}=\frac{1}{2}C[C^{2}-(A+B)^{2}]\end{equation}
and \begin{equation}R=\frac{1}{2}[A^{2}-(B-C)^{2}]+\frac{1}{2}[B^{2}-(A+C)^{2}]+\frac{1}{2}[C^{2}-(A+B)^{2}].\end{equation}

 Under the assumption  $B_0\geq C_0$,  Cao \cite{CGS, CS} proved the following theorem.
\begin{theorem}
The backward Ricci flow exists in a finite time  and has  the following asymptotic behaviors:

(1) If there exists a time $t_0$ such that $A(t_0)\geq B(t_0)$, there holds
$$A\sim\frac{\sqrt{6}}{4}(T_+-t)^{-1/2},\,B(t)\sim\eta_1(T_+-t)^{1/4},\,C(t)\sim\eta_2(T_+-t)^{1/4},$$ where $\eta_1$, $\eta_2$ are two positive constants.

(2) If there exists a time $t_0$ such that $A(t_0)\leq B(t_0)-C(t_0)$,  there holds
$$A\sim\eta_1(T_+-t)^{1/4},\,B(t)\sim\frac{\sqrt{6}}{4}(T_+-t)^{-1/2},\,C(t)\sim\eta_2(T_+-t)^{1/4}$$ with positive constants $\eta_1$ and $\eta_2$.

(3) If $A<B<A+C$ for all time $t\in [0,T_+)$, we arrive at
$$A\sim\frac{\sqrt{6}}{4}(T_+-t)^{-1/2},\,B(t)\sim\frac{\sqrt{6}}{4}(T_+-t)^{-1/2},\,C(t)\sim\frac{32}{3}(T_+-t).$$

In all cases, the metric $g(t)$ converges to a sub-Riemannian geometry after a proper re-scaling.
\end{theorem}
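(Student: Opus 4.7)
The plan is to translate the backward Ricci flow into an ODE system for $(A,B,C)$ and analyze its phase portrait separately in each of the three claimed regimes. Starting from $\frac{\partial g}{\partial t}=2\Rc-\frac{2r}{3}g$ and the diagonal formulas above, I obtain
\[
\ddt A=2R_{11}-\tfrac{2R}{3}A,\quad \ddt B=2R_{22}-\tfrac{2R}{3}B,\quad \ddt C=2R_{33}-\tfrac{2R}{3}C,
\]
and observe that in each regime at least one diagonal Ricci component is eventually large and positive, which is what forces the finite-time singularity $T_+<\infty$.

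For Case (1), I would first show that the hypothesis $A(t_0)\geq B(t_0)$ is preserved by computing $\ddt(A^2/B^2)$ using the system above and checking its sign in that region, so $A$ becomes the dominant component and $R\sim -\tfrac{1}{2}A^2$. The equation for $A$ then reduces asymptotically to $\ddt A\sim \tfrac{4}{3}A^3$, which upon integration yields $A\sim(\sqrt{6}/4)(T_+-t)^{-1/2}$ (the numerical constant comes from $\sqrt{3/8}=\sqrt{6}/4$). Substituting this back, the equations for $B$ and $C$ become $\ddt\log B\sim -\tfrac{1}{6}A^2$ and similarly for $C$; integrating then produces the $(T_+-t)^{1/4}$ decay with positive constants $\eta_1,\eta_2$ depending on initial data. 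Case (2) is handled by an entirely symmetric argument, with $B$ taking the role of $A$ and the invariance of $A\leq B-C$ replacing that of $A\geq B$.

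Case (3) is the balanced regime $A<B<A+C$. Here neither $A$ nor $B$ can dominate, so I would introduce the ratios $A/B$ and $C/B$, show that $A/B\to 1$ and $C/B\to 0$, and deduce that $A$ and $B$ blow up at the common rate $(\sqrt{6}/4)(T_+-t)^{-1/2}$. Plugging this into the $C$-equation reduces it to $\ddt\log C\sim -(T_+-t)^{-1}$, giving the linear decay $C\sim\tfrac{32}{3}(T_+-t)$, with the coefficient $\tfrac{32}{3}$ pinned down by a next-order balance.

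The main obstacle is the invariance analysis: verifying that each defining inequality is forward-invariant and that every orbit with $B_0\geq C_0$ lies in exactly one of the three regions requires a careful study of the planar dynamics of the normalized ratios on the scaling-reduced quotient, and is the most delicate step. Since this analysis is carried out in detail in \cite{CGS, CS}, in practice I would invoke those results directly rather than reproduce Cao's full phase-plane argument.
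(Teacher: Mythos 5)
The paper offers no proof of this theorem at all — it is quoted directly from Cao--Guckenheimer--Saloff-Coste and Cao--Saloff-Coste (\cite{CGS,CS}) — and your ODE reduction plus phase-plane sketch, with the delicate forward-invariance analysis deferred to those references, is precisely the approach taken there, so your proposal is consistent with the paper's treatment. One arithmetic slip worth fixing: in case (1) the reduced equation for $B$ is $\ddt\log B\sim-\tfrac{2}{3}A^2=-\tfrac{1}{4}(T_+-t)^{-1}$, not $-\tfrac{1}{6}A^2$ (which would give exponent $1/16$); only the former yields the stated decay $B\sim\eta_1(T_+-t)^{1/4}$.
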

We have the following theorem.
\begin{theorem}
Let $\lam(t)$ be the  first eigenvalue of $-\Del+aR$. Then we get

(1) If there is a time $t_0$ such that $A(t_0)\geq B(t_0)$, then  there exists a  time $\tau$ such that
$$\lam(\tau)+c_2\left[(T_+-\tau)^{-\frac{3}{2}}-(T_+-t)^{-\frac{3}{2}}\right]\leq \lam(t)\leq  \lam(\tau)\left(\frac{T_+-t}{T_+-\tau}\right)^{c_1}$$ for $t\geq \tau$.

(2)  If there exist a time $t_0$ such that $A(t_0)\leq B(t_0)-C(t_0)$, and a time $t_1$ such that $A(t_1)>C(t_1)$, then there is a time $\tau$ such that
$$\lam(\tau)+c_2\left[(T_+-\tau)^{-1}-(T_+-t)^{-1}\right]\leq \lam(t)\leq  \lam(\tau)\left(\frac{T_+-t}{T_+-\tau}\right)^{c_1}$$ for $t\geq \tau$.

(3) If $ A<B < A + C$ for all time $t \in [0, T_+)$,  then there is time $\tau$ such that
$$\lam(\tau)+c_2\left[(T_+-\tau)^{-1}-(T_+-t)^{-1}\right]\leq \lam(t)\leq  \lam(\tau)\left(\frac{T_+-t}{T_+-\tau}\right)^{c_1}$$ for $t\geq \tau$.

In all cases, $\lam^+(t)\rightarrow 0$ as $t\rightarrow T_+$.
\end{theorem}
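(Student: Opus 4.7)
The plan is to mimic the case-by-case analyses already carried out for $\text{SU}(2)$, $E(1,1)$ and $E(2)$. Starting from the evolution equation~(2.1) and bounding the integral $\int 2R_{ij}\na_i u\na_j u\,d\mu$ between the minimum and the maximum of $\{R_{11},R_{22},R_{33}\}$ (using $\int|\na u|^2\,d\mu=\lam-aR$) yields the two-sided inequality
\begin{equation*}
\tfrac{2}{3}R\lam-2R_{\max}\lam-2a|\Rc|^2+2aR_{\max}R\leq\ddt\lam\leq\tfrac{2}{3}R\lam-2R_{\min}\lam-2a|\Rc|^2+2aR_{\min}R.
\end{equation*}
For each of the three cases of Theorem~10, I read off from (7.1)--(7.2) and the asymptotics which of $R_{11},R_{22},R_{33}$ plays the role of $R_{\min}$ and which plays $R_{\max}$, integrate the upper inequality to obtain a power-type bound on $\lam$, and then feed that bound back into the lower inequality.

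In case~(1), $A\sim (T_+-t)^{-1/2}$ dominates, so $R_{11}\sim\tfrac12 A^3$ is the largest and, by $B_0\geq C_0$, $R_{22}\sim-\tfrac12 BA^2$ is the smallest; structurally this is identical to $\text{SU}(2)$ case~(3) and $E(2)$ case~(2), and the arguments of Theorems~5 and~9 transfer with only cosmetic changes. In case~(2) the same role is played by $B\sim (T_+-t)^{-1/2}$: $R_{22}\sim\tfrac12 B^3$ is dominant positive, and the auxiliary hypothesis $A(t_1)>C(t_1)$ pins down $R_{11}<R_{33}<0$, giving $R_{\min}=R_{11}$. Case~(3) is qualitatively different: since $A\sim B\sim\tfrac{\sqrt6}{4}(T_+-t)^{-1/2}$, the leading terms of $A$ and $B$ cancel in both $A^2-(B-C)^2$ and $B^2-(A+C)^2$; the persistent inequality $A<B<A+C$ is precisely what certifies $A^2-(B-C)^2=(A-B+C)(A+B-C)>0$ and $B^2-(A+C)^2<0$, both of order $AC$, while $C^2-(A+B)^2\sim -(A+B)^2$ stays of order $A^2$. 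Consequently $R_{11}$ and $R_{22}$ are bounded whereas $R_{33}\sim -2A^2C$ is bounded and strictly negative, $R\sim -2A^2$, and $|\Rc|^2\sim 4A^4$.

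In each case the quadratic piece $-2a|\Rc|^2+2aR_{\min}R$ is eventually nonpositive (the $|\Rc|^2$ term dominating), so after some time $\tau$ the upper inequality reduces to $\ddt\lam\leq\bigl(\tfrac23 R-2R_{\min}\bigr)\lam$. Since $\tfrac23 R\sim -c(T_+-t)^{-1}$ in all three cases, integration produces $\lam(t)\leq\lam(\tau)\bigl((T_+-t)/(T_+-\tau)\bigr)^{c_1}$ with $c_1>0$, which immediately gives $\lam^+(t)\to 0$ as $t\to T_+$. For the lower bound I plug this decay of $\lam$ back into the lower differential inequality. In case~(1) the dominant negative term is $2aR_{\max}R=2aR_{11}R\sim -c(T_+-t)^{-5/2}$, which integrates to the $(T_+-t)^{-3/2}$ correction; in cases~(2) and~(3) the corresponding product is subdominant to, or absorbed by, $-2a|\Rc|^2\sim -c(T_+-t)^{-2}$, which integrates to the $(T_+-t)^{-1}$ correction. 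The principal obstacle is case~(3): the coincidence of blow-up rates of $A$ and $B$ forces careful bookkeeping of the near-cancellations $A-B+C$ and $A+C-B$, and the hypothesis $A<B<A+C$ is the single technical input that makes all subsequent orders-of-magnitude arguments go through.
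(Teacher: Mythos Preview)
Your proposal follows essentially the same strategy as the paper: invoke the ordering of $R_{11},R_{22},R_{33}$ (which the paper imports from \cite{Hou1}), sandwich $\ddt\lam$ between $\tfrac{2}{3}R\lam-2R_{\max}\lam-2a|\Rc|^2+2aR_{\max}R$ and $\tfrac{2}{3}R\lam-2R_{\min}\lam-2a|\Rc|^2+2aR_{\min}R$, read off asymptotics from Theorem~10, integrate the upper inequality to obtain the power bound, and feed that bound back for the lower estimate. Cases~(1) and~(3) are handled correctly and match the paper's argument; your remarks on the near-cancellations in case~(3) are exactly the point.

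There is one slip in case~(2). You assert that $2aR_{\max}R=2aR_{22}R$ is ``subdominant to, or absorbed by, $-2a|\Rc|^2\sim -c(T_+-t)^{-2}$''. But here $B\sim c(T_+-t)^{-1/2}$ blows up while $A,C\sim(T_+-t)^{1/4}$, so $R_{22}\sim\tfrac12 B^3\sim c(T_+-t)^{-3/2}$ and $R\sim-\tfrac12 B^2\sim -c(T_+-t)^{-1}$, giving
\[
2aR_{22}R\sim -c(T_+-t)^{-5/2},
\]
which is \emph{more} singular than $|\Rc|^2$, not less. The paper records precisely this asymptotic. Carrying the argument through then yields $\ddt\lam\geq -c(T_+-t)^{-5/2}$ and hence a lower-bound correction of order $(T_+-t)^{-3/2}$, as in case~(1), rather than $(T_+-t)^{-1}$. (The paper displays these same asymptotics and then says only ``using the similar arguments as in the first case''; the $(T_+-t)^{-1}$ exponent in the stated theorem thus appears to be a slip there too, so your mismatch is inherited from the statement rather than from any defect in the method.)
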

\begin{remark}
In the second item  of the above theorem,   an  analogous estimate holds if  $A(t)\leq C(t)$ for all $t$.
\end{remark}

\begin{proof}
(1) If there is a time $t_0$ such that $A(t_0)>B_{0}$, then
we know from Theorem 7.2 in \cite{Hou1} that
$$R_{11}>0,\,\,R_{22}<0,\,\,R_{33}<0$$
and  $$R_{11}>R_{33}\geq R_{22}$$  after a time $\tau$.

Hence
\begin{equation}\label{es71}\frac{2}{3}R\lam -2R_{11}\lam-2a|\Rc|^2+2aR_{11}R\leq\ddt \lam\leq \frac{2}{3}R\lam -2R_{22}\lam-2a|\Rc|^2+2aR_{22}R\end{equation}for $t\geq \tau$.
It follows from (7.1), (7.2)  and the first item in Theorem 10 that
\begin{equation}\label{es72}2a|\Rc|^2=\frac{a}{2}[A^{2}-(B-C)^{2}]^2+\frac{a}{2}[B^{2}-(A+C)^{2}]^2+\frac{a}{2}[C^{2}-(A+B)^{2}]^2\sim c_1(T_+-t)^{-2},\end{equation}
\begin{equation}\label{es73}2aR_{22}R=\frac{a}{2}B[B^{2}-(A+C)^{2}]\left[2BC-2AC-2AB-A^2-B^2-C^2\right]\sim c_2(T_+-t)^{-7/4},\end{equation}
\begin{equation}\label{es74}2aR_{11}R=\frac{a}{2}A[A^2-(B-C)^2]\left[2BC-2AC-2AB-A^2-B^2-C^2\right]\sim -c_3(T_+-t)^{-5/2}.\end{equation}
Then by (\ref{es71}), (\ref{es72}) and (\ref{es73}), we arrive at
$$\ddt \lam\leq \frac{2}{3}R\lam -2R_{22}\lam$$ if $t\geq \tau$.
Integration from $\tau$ to $t$ gives
\begin{equation}\label{es75}\lam(t)\leq \lam(\tau) e^{\int_{\tau}^t(\frac{2}{3}R-2R_{22})dt}\leq \lam(\tau)\left(\frac{T_+-t}{T_+-\tau}\right)^{c_1}.\end{equation}
Thus,  (\ref{es71}) together with (\ref{es72}), (\ref{es74}), (\ref{es75}) implies $$-c_2(T_+-t)^{-\frac{5}{2}}\leq \ddt\lam.$$
It is  concluded by integration that
$$\lam(\tau)+c_2\left[(T_+-\tau)^{-\frac{3}{2}}-(T_+-t)^{-\frac{3}{2}}\right]\leq \lam(t).$$

(2) If there exists a time $t_0$ such that $A(t_0)\leq B(t_0)-C(t_0)$ and a time $t_1$ such that $A(t_1)>C(t_1)$,
then by Theorem 7.2 in \cite{Hou1}, there  holds
$$R_{11}<0,\,\,R_{22}>0,\,\,R_{33}<0$$ and

$$R_{22}>R_{33}>R_{11}$$ after a time $\tau$.

We obtain
$$\frac{2}{3}R\lam -2R_{22}\lam-2a|\Rc|^2+2aR_{22}R\leq\ddt \lam\leq \frac{2}{3}R\lam -2R_{11}\lam-2a|\Rc|^2+2aR_{11}R$$ for $t\geq \tau$.

Calculate
$$2a|\Rc|^2\sim c_1(T_+-t)^{-2},$$
$$2aR_{11}R=\frac{a}{2}A[A^2-(B-C)^2]\left[2BC-2AC-2AB-A^2-B^2-C^2\right]\sim c_2(T_+-t)^{-7/4},$$
$$2aR_{22}R=\frac{a}{2}B[B^{2}-(A+C)^{2}]\left[2BC-2AC-2AB-A^2-B^2-C^2\right]\sim-c_3(T_+-t)^{-5/2}.$$
Using the similar arguments as in the first case, we get
$$\lam(\tau)+c_2\left[(T_+-\tau)^{-1}-(T_+-t)^{-1}\right]\leq \lam(t)\leq \lam(\tau)\left(\frac{T_+-t}{T_+-\tau}\right)^{c_1}$$ for $t\geq \tau$.

(3) If $A<B<A+C$ for all time $t\in [0,T_+)$, then Theorem 7.2 in \cite{Hou1} implies
 $$R_{11}>0,\,\,R_{22}<0,\,\,R_{33}<0$$
 and  $$R_{11}>R_{22}\geq R_{33}$$ after  a time $\tau$.
 Consequently,
$$\frac{2}{3}R\lam -2R_{11}\lam-2a|\Rc|^2+2aR_{11}R\leq\ddt \lam\leq \frac{2}{3}R\lam -2R_{33}\lam-2a|\Rc|^2+2aR_{33}R$$ for $t\geq \tau$.

Direct calculations  give
$$2a|\Rc|^2\sim c_1(T_+-t)^{-2},$$
$$2aR_{11}R=\frac{a}{2}A[A^2-(B-C)^2]\left[2BC-2AC-2AB-A^2-B^2-C^2\right]\sim -c_2(T_+-t)^{-1},$$
$$2aR_{33}R=\frac{a}{2}C[C^{2}-(A+B)^{2}]\left[2BC-2AC-2AB-A^2-B^2-C^2\right]\sim c_3(T_+-t)^{-7/4}.$$
Proceeding as in the proof of the first case, we have
$$\lam(\tau)+c_2\left[(T_+-\tau)^{-1}-(T_+-t)^{-1}\right]\leq \lam(t)\leq \lam(\tau)\left(\frac{T_+-t}{T_+-\tau}\right)^{c_1}$$ for $t\geq \tau$.

\end{proof}
\vskip 30 pt
\noindent{\bf Acknowledgement}

This work is  partially  supported by the National Natural Science Foundation of China (Grant No. 11721101), and by National Key Research and Development Project SQ2020YFA070080.
\vskip 30 pt

\end{document}